\documentclass[12pt]{article}
\usepackage[pctex32]{graphics}

\usepackage{amssymb,latexsym,amsmath,epsfig,amsthm}
\newtheorem{theorem}{Theorem}

\begin{document}
	\begin{center}
		\uppercase{\bf The problem with two linear branches}
		\vskip 20pt
		{\bf Fritz Schweiger}\\
		 Department of Mathematics, University of Salzburg, Salzburg, Austria\\
		{\tt fritz.schweiger@plus.ac.at}\\
	\end{center}
	
		\vskip 30pt
	{\bf Abstract}
	\noindent 
	Piecewise fractional linear maps wzth three or more branches have been studied in several papers. For many Moebius maps the shape of the density of their invariant measurs can be written down exactly. However, if just two branches are linear no explicit form is known. In this paper a partial solution is offered.

	\noindent
	{\em Mathematics Subject Classification (2000)} : 11K55, 28D05, 37A05\\
	{\em Key words}: fibred systems,f-expansion, invariant measure\\
	
	{\bf Introduction}\\
	Since Rényi's foundational work \cite{R} ergodic theory connected to number theory has seen a lot of publications (see  \cite{Sch95}, \cite{IoKr}, \cite{Sch16}).  
	Fractional linear maps  $T: [0,1] \to [0,1]$
	with three branches are interesting examples of fibred sytems in ergodic theory (see \cite{Sch06},\cite{Sch18}). Many fractional linear systems with three branches are ergodic and admit an invariant measure  $\mu$ absolutely continuous to Lenesgue measure $	\lambda$ However, less is known about the density of $\mu$. The basic method is the use of a {\em dual map}. It is explained as follows.\\
	We consider {\em Moebius maps} with the partition $0 < p_1 < p_2 < 1$ and three matrices $V_{\alpha}, V_{\beta}. V_\gamma$ such that the associated fractional linear maps satisfy $V_\alpha[0,p_1] = V_\beta[p_1, p_2] = V_\gamma [p_2,1] = [0,1]$. Then we use the maps associated to the transposed matrices  $V_{\alpha}^{*}, V_{\beta}^{*}, V_{\gamma}^{*}$. \\
	(1)  There is an invertible symmetric matrix $M$ such that $M V_i = V_i^{*}M$, $i = \alpha, \beta,  \gamma$. The image $B^{*} = M[0.1] =[\rho, \sigma]$ together with these three maps is called a {\em natural dual} and 
$$  h(x)= \int_{B^{*}} \frac{dy_1 dy_2 }{(1+ x_1 y_1 + x_2 y_2)^3} $$ is the invariant density. \\
(2) It can happen that there exists an interval $\rho, \sigma]$ such that the three branches $V_{\alpha}^{*}, V_{\beta}^{*}, V_{\gamma}^{*} $  form a fibred system which is not a natural dual. It is called an {\em exceptional dual}. Then again 
$$  h(x)= \int_{B^{*}} \frac{dy_1 dy_2 }{(1+ x_1 y_1 + x_2 y_2)^3} $$ is the invariant density. If there is a common fixed point $\tau$ for all branches then 
$$ h(x)= \frac{1}{(1+ x \tau)^2}.$$
(3)	If there is fibred system with two branches $V_0$ and $V_1$ such that $V_\alpha = V_0, V_\beta = V_1 V_0
, V_\gamma = V_1 V_1$ or $V_\alpha = V_0, V_\beta = V_1 V_1, V_\gamma = V_1V_0$ then the density can be found using  jump transformation (this device is called {\em 1-step extension}, see \cite{Sch25}). There exist two other cases with $V_\gamma = V_1$. \\
Now if $Vx = \frac{a+bx}{c}$, which means that $V$ is linear, then  $V^{*}y = \frac {by}{c+ay}$  and $V^{*}0 = 0$. This observation leads to an interesting problem. If just one of the three branches is linear then there are good chances for finding the density of the invariant measure. If all three branches are linear then the density is $h(x) =1$. If two branches are linear and just one branch is non-linear, then the shape of the density of the invariant measure (if it exists) is unknown! This paper gives a partial solution  to this problem. \\

{\bf The first construction}\\
 
Here we use the partition $0 < \frac{1}{3}
< \frac{2}{3} < 1$. Let $T$ be a fractional linear map  such that $T$ is linear on $[0, \frac{1}3]$ and on $[\frac{2}{3},1]$ with $T[0, \frac{1}3] = T[\frac{2}{3}, 1]= [\frac{1}3, \frac{2}{3}]$ and non-linear on $[\frac{1}{3}, \frac{2}{3}]$ and $T[\frac{1}{3}, \frac{2}{3}]= [0, 1]$. Later we will see that the value $\beta = 0$ corresponds to $T$ linear too. \\
We consider a jump transformation $S$, namely  the Moebius map  defined by $S = T^2$ on $[0, \frac{1}{3}] \cup [\frac{2}{3}, 1]$ and $S=T$ on $[\frac{1}{3}, \frac{2}{3}]$.
 If $\nu$ is an invariant measure for $S$ with density $h$  then $\mu$ (with density $g$),  defined by $\mu(E)= \nu (E)+ \nu(T^{-1}E \cap [0,\frac{1}{3}]) + \nu(T^{-1}E \cap  [\frac{2}{3},1])$ is, invariant for $T$.\\
 Since $S$ is a piecewise fractional linear map wth three branches we try to find antural dual. If $V_\alpha, V_\beta, V_\gamma$ are the three inverse branches of $T$ then $V_{\alpha \beta}, V_\beta, V_{\gamma \beta}$ are the three inverse branches of $S$. Note that the density $g$ is given as
 $$g(x) = h(x), , 0 < x < \frac{1}{3} , \frac{2}{3} < x < 1$$
 $$g(x)= h(x)+ g(V_\alpha x)\omega_\alpha (x) + h(V_\gamma x)\omega_\gamma (x), \, \frac{1}{3} < x <  \frac{2}{3}  .$$
Here $\omega_\alpha$ and $\omega_\gamma$ denote the Jacobians of $V_\alpha$ and $V_\gamma$.\\
 The result coincides with the result for $\beta = 0$, namely $h(x)=	$ and $g(x)= 3$ on $[\frac{1}{3},  \frac{2}{3}].$ \\
 We write $\varepsilon = 1 $ to indicate an increasing branch and $\varepsilon =-1$ to indicate a decreasing branch. The {\em type} of T is denoted $[\varepsilon_\alpha, \varepsilon_\beta, \varepsilon_\gamma]$. Note, that we assume $-1 < \beta \leq 2$. However,  $\beta = 0$ hbelongs to  the linear map..\\

\begin{theorem} . 
	(1) If the map $T$ is of type $[1,1,1]$, then  $S$ has a natural dual and 
	$$ h(x) = \frac{1}{(2 - \beta + 3 \beta x) (2 + 3 \beta x )}.$$
	(1) If the map $T$ is of type $[1,-1,1]$, then  $S$ has a natural dual and 
$$ h(x) = \frac{1}{(4+ \beta + 3 \beta x) (4 + 3 \beta x )}.$$
\end{theorem}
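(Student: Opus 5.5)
The plan is to construct the natural dual of the jump transformation $S$ explicitly, as in item (1) of the Introduction, and then read off the density. On the interval the integral formula reduces to
$$ h(x)=\int_\rho^\sigma \frac{dy}{(1+xy)^2}=\frac{\sigma-\rho}{(1+\rho x)(1+\sigma x)}. $$
Comparing with the claimed formula for type $[1,1,1]$, I aim for
$$ \rho=\frac{3\beta}{2-\beta},\qquad \sigma=\frac{3\beta}{2}, $$
up to the order of the endpoints and a normalising constant. For type $[1,-1,1]$ I aim for $\rho=\frac{3\beta}{4+\beta}$ and $\sigma=\frac{3\beta}{4}$.

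\textbf{Step 1: the matrices.} First I fix the inverse branches as $2\times 2$ matrices acting projectively, $x\mapsto \frac{a+bx}{c+dx}$. For type $[1,1,1]$ the linear branches are forced: $V_\alpha x=x-\frac13$ and $V_\gamma x=x+\frac13$, both restricted to $[\frac13,\frac23]$. The middle branch $V_\beta$ maps $[0,1]$ increasingly onto $[\frac13,\frac23]$. I normalise it as
$$ V_\beta x=\frac{1+(1+2\beta)x}{3(1+\beta x)}, $$
so that $\beta=0$ is the linear case and $\beta>-1$ keeps the denominator positive. For type $[1,-1,1]$ I take the decreasing analogue, obtained by composing with $x\mapsto 1-x$ in the appropriate place. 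If this normalisation does not reproduce the denominators $2-\beta+3\beta x$ and $4+\beta+3\beta x$, I will adjust it; the method does not depend on the particular choice. I then multiply out the three inverse branches of $S$, namely $V_{\alpha\beta}=V_\alpha V_\beta$, $V_\beta$ and $V_{\gamma\beta}=V_\gamma V_\beta$.

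\textbf{Step 2: the symmetric matrix $M$.} Write $M=\begin{pmatrix} m_1 & m_2\\ m_2 & m_3\end{pmatrix}$ and impose $MW=W^{*}M$ for each $W\in\{V_{\alpha\beta},V_\beta,V_{\gamma\beta}\}$. Each condition is linear and homogeneous in $(m_1,m_2,m_3)$, and for a $2\times 2$ matrix it amounts to essentially one scalar equation. So three branches give a $3\times 3$ homogeneous system. I must show that it has a one-dimensional solution space and that the solution is invertible.

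This is where I expect the main obstacle. For generic three-branch maps the system is overdetermined and has only the trivial solution. The claim is that the special structure here, where two outer branches are translates of each other composed with the same $V_\beta$, makes the determinant vanish identically in $\beta$. I will verify this by direct computation. Because $V_\gamma V_\beta$ and $V_\alpha V_\beta$ differ only by the translation $x\mapsto x+\frac23$, I expect the equations for $V_{\alpha\beta}$ and $V_{\gamma\beta}$ to combine, together with the one for $V_\beta$, into a rank-two system.

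\textbf{Step 3: the density.} Having found $M$, I compute $B^{*}=M[0,1]=[M0,M1]$, which should give the endpoints $\rho,\sigma$ above. By item (1) of the Introduction this yields the invariant density of $S$. I then check positivity of $1+\rho x$ and $1+\sigma x$ on $[0,1]$ for $-1<\beta\le 2$. The endpoint $\beta=2$, where $\rho$ becomes infinite in the $[1,1,1]$ case, is treated as a limit in which the formula still holds. The value $\beta=0$ is a degenerate case where $B^{*}$ collapses and $h$ becomes constant; this agrees with the remark that $\beta=0$ reproduces the linear case. The type $[1,-1,1]$ computation is identical in structure and yields the second formula.
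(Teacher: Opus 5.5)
Your plan is the paper's proof: impose $M(V_ix)=V_i^{*}(Mx)$ for a symmetric $M$, written $Mx=\frac{B+Dx}{A+Bx}$. This gives three homogeneous equations with vanishing determinant, and $h$ is read off from $M[0,1]$. The paper's solutions $(A,B,D)=(2-\beta,3\beta,3\beta^2)$ and $(4+\beta,3\beta,3\beta^2)$ give exactly your endpoints: $M0=\frac{3\beta}{2-\beta}$, $M1=\frac{3\beta}{2}$, resp.\ $M0=\frac{3\beta}{4+\beta}$, $M1=\frac{3\beta}{4}$. For this match, the decreasing middle branch must be taken as $1-V_\beta^{\mathrm{inc}}(x)$, i.e.\ $1-x$ composed on the left.

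Your heuristic for the vanishing determinant is in fact an exact argument, tidier than the paper's direct computation. Each equation is linear in the matrix of the branch. The matrices of $x\mapsto x\mp\frac13$ (with the same denominator) sum to a multiple of the identity, so the matrices of $V_{\alpha\beta}$ and $V_{\gamma\beta}$ sum to twice that of $V_\beta$. Hence the middle equation is the average of the outer two, for either orientation of $V_\beta$.

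The one step that fails as stated is your positivity check for type $[1,1,1]$ on the whole range $-1<\beta\le 2$. The factor $1+\frac{3\beta}{2}x$ vanishes at $x=-\frac{2}{3\beta}$, which lies in $(0,1)$ for $-1<\beta<-\frac23$. Dynamically, $V_{\gamma\beta}(1)=1$ with $V_{\gamma\beta}'(1)=\frac{1}{3(1+\beta)}>1$, so $1$ is an attracting fixed point of $S$, and the sign-changing formula is not an invariant density there. For type $[1,1,1]$ you must therefore restrict to $-\frac23\le\beta\le 2$. At $\beta=-\frac23$ there is a neutral fixed point at $1$ and an infinite measure, just as $\beta=2$ gives one at $0$. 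The paper's standing assumption $-1<\beta\le 2$ overlooks this, and its proof never checks positivity. For type $[1,-1,1]$ the factors $4+\beta+3\beta x$ and $4+3\beta x$ are positive on $[0,1]$ for all $-1<\beta\le 2$, so your check goes through there.
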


	\begin{proof}
	We list the relevant (inverse) branches of $T$ and $S$.
	$$ \varepsilon_\alpha =1: V_\alpha x = \frac{-1+3x}{3}  , , \varepsilon_\alpha =-1: V_\alpha x = \frac{2 - 3x}{3} $$          
		$$ \varepsilon_\beta =1: V_\beta x = \frac{1 + (1+2 \beta)x}{3 + 3 \beta x}, \, \varepsilon_\beta = -1 : V_\beta x = \frac{2 + (-1 + \beta )x}{3 + 3\beta x}$$
		$$ \varepsilon_\gamma =1: V_\gamma x = \frac{1+3x}{3}  , , \varepsilon_\gamma =-1: V_\gamma x = \frac{4 - 3x}{3}           $$
	To determine the map $S$ one calculates  $V_{\alpha \beta}x = V_\alpha (V_\beta x)$
	and $V_{\gamma \beta}x = V_\gamma (V_\beta x)$.\\	
	We obtain
	$$ \varepsilon_{\alpha\beta} = 1: V_{\alpha\beta} x = \frac{(1+\beta)x}{3 + 3 \beta x}, \,\varepsilon_{\alpha\beta} = -1: V_{\alpha\beta} x = \frac{(1 - x}{3 + 3 \beta x}$$
		$$ \varepsilon_{\gamma\beta} = 1: V_{\gamma\beta} x = \frac{2+(1+3\beta)x}{3 + 3 \beta x}, \,\varepsilon_{\gamma\beta} = -1: V_{\gamma\beta} x = \frac{3 +(-1+2\beta)  x}{3 + 3 \beta x}$$
	 We look for a map $Mx = \frac{B + D x}{A+ Bx}$ such that $M (V_i x) = V_i^{*} (Mx)$, $ i \in \{\alpha \beta, \beta, \gamma \beta\}$.
	Then we obtain three homogeneous equations for three indeterminants $A, B, D$. A non-trivial  soltion requires that the Determinant vanishes,  $DET = 0$.\\ 
	(1)  Type $[1,1,1]$\\
	We find the homogeneous equations
	$$ A(3\beta) + B(-2 + \beta) = 0$$
	$$ A 3\beta + B(-2 + 2\beta) - D =0$$
	$$A3\beta + B (-2 + 3\beta) -2D = 0.$$
	Then $DET = 0$ and we find $A= 2- \beta$, $B= 3\beta$, and $D=3\beta^2$.\\
	As an example we take $\beta = 2$. Then we have $$V_{\alpha \beta}x  = \frac{x}{1+2x}, V_\beta x = \frac{1+5x}{3+6x}, V_{\gamma \beta}x = \frac{2+7x}{3+6x} $$ and $$h(x) = \frac{1}{x(1+3x)}.$$
	
	(2) Type $[1,-1,1]$\\
	Again we find $DET = 0$ and the values $A= 4 + \beta$, $B= 3 \beta$, and $D= 3 \beta^2$.	
		As an example we take $\beta = 1$. Then we have $$V_{\alpha \beta}x  = \frac{1-x}{3+x}, V_\beta x = \frac{2}{3+3x}, V_{\gamma \beta}x = \frac{3+x}{3+3x} $$ and $$h(x) = \frac{1}{(4+3x)(5+3x)}.$$
		
	\end{proof}
	\begin{theorem}  
		The map $S$ has no natural dual for the  other 6 types of $T$.
	\end{theorem}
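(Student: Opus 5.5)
The plan is to repeat, for each of the remaining six types, the computation used in the proof of Theorem 1, and to show that the determinant never vanishes identically in $\beta$ and has no admissible roots. There are $2^3=8$ types $[\varepsilon_\alpha,\varepsilon_\beta,\varepsilon_\gamma]$. Types $[1,1,1]$ and $[1,-1,1]$ were settled in Theorem 1, so the six remaining types are $[-1,\pm1,\pm1]$ and $[1,\pm1,-1]$.

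For each of these types I first write down the three inverse branches of $S$. These are $V_{\alpha\beta}=V_\alpha V_\beta$, $V_\beta$ and $V_{\gamma\beta}=V_\gamma V_\beta$, built from the list of $V_\alpha,V_\beta,V_\gamma$ given in the proof of Theorem 1. Each branch has the form
$$\frac{a+bx}{3+3\beta x},$$
and only the numerators change with the type. For a branch with matrix $\begin{pmatrix} c & d\\ a& b\end{pmatrix}$, i.e. $Vx=\frac{a+bx}{c+dx}$, the dual acts by the transposed matrix. The ansatz is $Mx=\frac{B+Dx}{A+Bx}$, i.e. $M=\begin{pmatrix}A&B\\B&D\end{pmatrix}$ symmetric. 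The condition $MV_i=V_i^{*}M$ up to a scalar is equivalent to $MV_i$ being symmetric, i.e. to the vanishing of the single off-diagonal difference of $MV_i$.

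So each branch gives one linear homogeneous equation in $A,B,D$. Its coefficients are the entries $a,b,c,d$ of that branch, namely $3$, $3\beta$ and the numerator coefficients. Explicitly, $MV$ symmetric means $Ad+Bb=Bc+Da$. Collecting the three equations gives a $3\times 3$ matrix whose entries are linear in $\beta$, and its determinant $DET(\beta)$ is a polynomial of degree at most $3$ in $\beta$. A natural dual exists only if $DET(\beta)=0$.

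Next I compute $DET(\beta)$ for each of the six types. The claim to establish is that it is not identically zero, in contrast with types $[1,1,1]$ and $[1,-1,1]$, where it vanished for all $\beta$. Even where it vanishes, it should do so only at $\beta=0$ (the linear case, which is excluded) or at values outside $-1<\beta\le 2$. Wherever an isolated admissible root appears, I check that the resulting $M$ is degenerate: either $AD-B^2=0$, so $M$ is not invertible, or $M[0,1]$ is not an interval $[\rho,\sigma]$ on which the dual branches form a fibred system. Either outcome excludes a natural dual.

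To organise the work, I note that flipping $\varepsilon_\alpha$ or $\varepsilon_\gamma$ replaces a linear branch $x\mapsto(\pm1+3x)/3$ by one composed with $x\mapsto 1-x$. Hence the equations for the flipped $\alpha$ and $\gamma$ branches differ from the Theorem 1 equations by an explicit substitution, which should make the six determinants quick to compare.

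The main obstacle is not computing the determinants but handling values of $\beta$ where $DET$ happens to vanish. For each such root I will explicitly solve for $(A:B:D)$ and verify non-invertibility of $M$ or failure of the dual fibred-system condition.
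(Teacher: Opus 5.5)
Your strategy is the paper's: one homogeneous equation $dA+(b-c)B-aD=0$ per branch of $S$, and there is no natural dual once the $3\times 3$ determinant is nonzero. The paper also conjugates by $\psi x=1-x$, which exchanges $\varepsilon_\alpha$ and $\varepsilon_\gamma$, to cut six types to four. The gap is that the proposal stops where the proof begins. You never evaluate any of the six determinants; you only assert that they ``should'' vanish only at $\beta=0$ or outside $(-1,2]$. Your fallback for an admissible root is also not an argument: if $DET(\beta_0)=0$ with $AD-B^2\neq 0$ for some admissible $\beta_0\neq 0$, a natural dual could exist and the theorem would fail there. The whole content of the statement is the computation you deferred.

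That computation is short. Your observation that flipping $\varepsilon_\alpha$ or $\varepsilon_\gamma$ composes with $\psi$, together with $V_\beta^-=\psi V_\beta^+$, shows that $V_{\alpha\beta}$ depends only on $\varepsilon_\alpha\varepsilon_\beta$ and $V_{\gamma\beta}$ only on $\varepsilon_\gamma\varepsilon_\beta$. All branches have $c=3$ and $d=3\beta$, so each row is $(3\beta,\,b-3,\,-a)$, and only six rows occur:
\begin{itemize}
\item for $V_{\alpha\beta}$ with $\varepsilon_\alpha\varepsilon_\beta=1$ resp.\ $-1$: $(3\beta,\beta-2,0)$ resp.\ $(3\beta,-4,-1)$;
\item for $V_\beta$ with $\varepsilon_\beta=1$ resp.\ $-1$: $(3\beta,2\beta-2,-1)$ resp.\ $(3\beta,\beta-4,-2)$;
\item for $V_{\gamma\beta}$ with $\varepsilon_\gamma\varepsilon_\beta=1$ resp.\ $-1$: $(3\beta,3\beta-2,-2)$ resp.\ $(3\beta,2\beta-4,-3)$.
\end{itemize}
Factor $3\beta$ out of the first column and subtract the first row; this reduces each case to a $2\times 2$ minor. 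With rows ordered $\alpha\beta,\beta,\gamma\beta$ and columns $A,B,D$, one gets
\begin{itemize}
\item $DET=-6\beta(1+\beta)$ for $[1,1,-1]$ and $[-1,1,1]$;
\item $DET=6\beta(1+\beta)$ for $[1,-1,-1]$ and $[-1,-1,1]$;
\item $DET=-12\beta(1+\beta)$ for $[-1,1,-1]$ and $12\beta(1+\beta)$ for $[-1,-1,-1]$.
\end{itemize}
For $[1,\pm1,1]$, by contrast, the two reduced rows are proportional. Hence for every admissible $\beta\neq 0$ the system forces $A=B=D=0$, and no degeneracy check is needed.

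These values disagree with the paper's printed list in three of the four cases. For example, the paper gives $9\beta(2+\beta)$ for $[-1,-1,-1]$, which is $27$ at $\beta=1$, whereas the rows above give $24$. The printed values also vanish only at $\beta\in\{0,-1,-2\}$, so the conclusion is unaffected. Still, you should carry out the computation yourself rather than cite either list.
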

	
	\begin{proof}
			Since the map $\psi x = 1-x$ is an isomorphism for the pairs $[1,-1-1]
	$, $[-1,-1,1]$ and $[1,1,-1]$, $[-1,,1]$ just 4 cases remain. Therefore we give the list of remaining 4 types and their determinants.\\
	$[1,-1,-1]: 	DET = 6 \beta$\\
	$[1,1,-1]: 		DET = 6\beta^2$\\
	$[-1,1,-1]: 	DET = 12\beta(1+\beta)	$\\
	$[-1,-1,-1]:	DET =  9\beta(2+\beta)$
	
\end{proof}

{\bf A general result}\\

We started with $p_1 =\frac{1}{3}$ and $p_2 = \frac{2}{3}$.  This choice was a very lucky one. The next theorem is a little bit surprising.

\begin{theorem}
	Let $0 < p_1 < p_2 < 1$, then for type $[1,1,1]$ and for type $[1,-1,1]$      of $T$ the map $S$ has a natural dual if and only if $p_1^2 + p_2^2 - p_1 p_2 - p_1 = 0$.
	\end{theorem}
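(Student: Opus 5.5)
Following the proof of the first theorem, I would parametrize $T$ for a general partition and reduce the existence of a natural dual to a $3\times 3$ determinant, then factor that determinant.

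\emph{Parametrizing $T$.} The linear inverse branches for type $[1,\cdot,1]$ are forced by $T[0,p_1]=T[p_2,1]=[p_1,p_2]$:
$$V_\alpha x=\frac{p_1(x-p_1)}{p_2-p_1},\qquad V_\gamma x=p_2+\frac{(1-p_2)(x-p_1)}{p_2-p_1}.$$
These map $[p_1,p_2]$ onto $[0,p_1]$ and $[p_2,1]$ respectively.

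The middle branch $V_\beta$ maps $[0,1]$ onto $[p_1,p_2]$. It is increasing in type $[1,1,1]$ and decreasing in type $[1,-1,1]$, and after normalization it depends on one free parameter $\beta$ (the nonlinearity):
$$V_\beta x=\frac{p_1+(p_2-p_1+\beta p_2)x}{1+\beta x},\qquad V_\beta x=\frac{p_2+(p_1-p_2+\beta p_1)x}{1+\beta x}.$$

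I then compute the composed branches $V_{\alpha\beta}=V_\alpha V_\beta$ and $V_{\gamma\beta}=V_\gamma V_\beta$. All three branches of $S$ then share the denominator $1+\beta x$ up to scalar factors, which keeps the algebra manageable.

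\emph{Setting up the system.} As before, I write $M$ with symmetric matrix $\begin{pmatrix}A&B\\B&D\end{pmatrix}$, i.e.\ $Mx=\frac{B+Dx}{A+Bx}$. I require $MV_i=V_i^*M$ (as matrices, up to a scalar) for $i\in\{\alpha\beta,\beta,\gamma\beta\}$.

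For a $2\times2$ matrix $V$ and symmetric $M$, the condition $MV=V^{*}M$ says exactly that $MV$ is symmetric. This gives one linear equation in $(A,B,D)$ per branch: the off-diagonal entries of $MV_i$ must agree. So I obtain three homogeneous linear equations. Their coefficients are polynomials in $p_1,p_2,\beta$, and a natural dual exists precisely when the determinant $\mathrm{DET}(p_1,p_2,\beta)$ vanishes. One must also check that the resulting $M$ is invertible and that $B^*=M[0,1]$ gives a genuine fibred system, but the invertibility check should be routine.

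\emph{Factoring the determinant.} I expect $\mathrm{DET}$ to factor as a power of $\beta$, times a factor $(p_2-p_1)^k$ from clearing denominators, times
$$p_1^2+p_2^2-p_1p_2-p_1.$$
The $\beta$-factor must be present because $\beta=0$ is the linear case, where $h=1$ always works and the determinant vanishes trivially.

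The main obstacle is showing the remaining factor is independent of $\beta$. For the nonlinear case $\beta\neq0$, the condition has to hold for every admissible $\beta$ simultaneously, and a priori $\mathrm{DET}$ could also contain mixed $\beta$-dependent factors. To handle this, I would expand $\mathrm{DET}$ as a polynomial in $\beta$ and verify that, after removing the power of $\beta$, each coefficient is a multiple of $p_1^2+p_2^2-p_1p_2-p_1$. The cheapest way to guess and confirm the factorization is the check at $p_1=\tfrac13,\ p_2=\tfrac23$: there the quadric equals $\tfrac19+\tfrac49-\tfrac29-\tfrac13=0$, consistent with the first theorem.

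The decreasing case $[1,-1,1]$ is handled identically. Its determinant should carry the same quadric factor, since the swapped endpoints $p_1\leftrightarrow p_2$ in $V_\beta$ enter only through the pure $\beta$-factor.

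Finally, for the converse direction, when the quadric vanishes I would solve the system explicitly, obtaining $A,B,D$ generalizing $A=2-\beta,\ B=3\beta,\ D=3\beta^2$. This exhibits $M$ and hence the density
$$h(x)=\frac{1}{(A+Bx)(A+B+(B+D)x)},$$
up to normalization.
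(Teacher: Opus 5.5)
Your route is the paper's. Your branches coincide with the paper's, and symmetry of $MV_i$ gives exactly its three homogeneous equations. But the proposal stops where the proof begins:
the factorization of $\mathrm{DET}$ is only ``expected'';
a check at $(\tfrac13,\tfrac23)$ proves nothing about a polynomial identity;
and the remark that for $[1,-1,1]$ the swapped endpoints ``enter only through the pure $\beta$-factor'' is not an argument.

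The computation is short. Write $V_ix=\frac{a+bx}{c+dx}$. Then symmetry of $MV_i$ reads $dA+(b-c)B-aD=0$, and all three branches of $S$ have $c=1$, $d=\beta$. Subtracting the first equation from the other two gives $\mathrm{DET}=\beta\,m(\beta)$. Here the $2\times2$ minor $m$ is affine in $\beta$, and for both types its constant term and $\beta$-coefficient both equal $\pm Q$, with $Q=p_1^2+p_2^2-p_1p_2-p_1$. So $\mathrm{DET}=\pm\beta(1+\beta)Q$, as in the paper.

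Your predicted shape is off. No $(p_2-p_1)^k$ appears, since it cancels in $V_{\alpha\beta}$ and $V_{\gamma\beta}$. The cofactor of $Q$ is $\beta(1+\beta)$, not a power of $\beta$. Also, $\beta$ is fixed by $T$, so nothing has to hold ``for every admissible $\beta$ simultaneously''. You only need that cofactor to be nonzero at the given $\beta$.

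The step that actually fails is your claim that invertibility of $M$ is routine. For type $[1,1,1]$ with $Q=0$ the system has rank $2$, and its solution is
$(A,B,D)=\bigl(1-p_1(1+\beta),\ \beta,\ \beta(1-p_1-p_2+(1-p_1)\beta)/p_2\bigr)$.
For $p=(\tfrac13,\tfrac23)$ this is one third of the paper's $(2-\beta,3\beta,3\beta^2)$. Its determinant is
\[ AD-B^2=-\frac{p_1(1-p_1)}{p_2}\,\beta(1+\beta)\Bigl(\beta-\frac{1-p_1-p_2}{p_1}\Bigr). \]
Hence at $\beta=(1-p_1-p_2)/p_1$ the only candidate $M$ has rank one.

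A concrete case is $p=(\tfrac17,\tfrac37)$, $\beta=3$, where $M\propto\begin{pmatrix}1&7\\7&49\end{pmatrix}$. Then $B^*$ collapses to the single point $7$, which is a common fixed point of the three dual branches. The density is $(1+7x)^{-2}$, which is case (2) of the introduction, and there is no natural dual in the sense of (1).

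This cannot be removed by restricting $\beta$. For $p_1>\tfrac13$ on the conic, the exceptional $\beta$ lies in $(-1,0)$. Type $[1,-1,1]$ has an analogous exceptional value: $\beta=15$ for $p=(\tfrac17,\tfrac37)$, again with $M\propto\begin{pmatrix}1&7\\7&49\end{pmatrix}$.

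The paper's proof has the same blind spot, since it equates existence of a natural dual with $\mathrm{DET}=0$. So the ``if'' direction holds only after excluding this $\beta$, or after admitting the degenerate one-point dual. Your converse step must handle this explicitly rather than call it routine.
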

\begin{proof}
We give the relevant maps.
	$$ \varepsilon_\alpha =1: V_\alpha x = \frac{-p_1^2 + p_1x}{p_2 - p_1}   $$          
$$ \varepsilon_\beta =1: V_\beta x = \frac{p_1 + (p_2 - p_1 +p_2\beta)x}{1 +  \beta x}, \, \varepsilon_\beta = -1 : V_\beta x= \frac{p_2 + (-p_2 + p_1 + p_1\beta )x}{1 + \beta x}$$
$$ \varepsilon_\gamma =1: V_\gamma x = \frac{p_2^2 -p_1 + (1-p_2)x}{p_2 - p_1}  , , \  $$	For type $[1,1,1]$ we calculate
$$V_{\alpha\beta} x = \frac{(p_1+p_1\beta)x}{1 + \beta x}, \, V_{\gamma\beta} x = \frac{p_2+(1 - p_2+\beta)x}{1 +  \beta x}$$
and find $DET = (p_1^2 + p_2^2 - p_1 p_2 - p_1)(\beta^2 + \beta)$.\\
For type $[1,-1,1]$ we calculate
 $$V_{\alpha\beta} x = \frac{p_1 - p_1x}{1 +  \beta x}, \,  V_{\gamma\beta} x = \frac{1 +(-1 + p_2 + p_2\beta)  x}{ 1+ \beta x}$$
 and find $DET = (p_1^2 + p_2^2 - p_1 p_2 - p_1)(\beta^2 + \beta)$.
In both cases $DET = 0$ only if $\beta =0$, $\beta = -1$ (not an allowed value) or $p_1^2 + p_2^2 - p_1 p_2 - p_1 = 0$.
\end{proof}
If $p_1 + p_2 =1$ then we find $p_1 = \frac{1}{3}$ and $p_2 = \frac{2}{3}$. Another rational solution is  $p_1 = \frac{1}{7}$ and $p_2 = \frac{3}{7}$.\\

	{\bf Final remark}\\
	
	The paper is restricted to find natural duals of $S$. There could exist  exceptional duals but the great number of possible configurations could be an interesting further work.


\begin{thebibliography}{99}
		\bibitem{IoKr} Iosifescu, M. and Kraaikamp, C. 2002: {\em Metrical Theory of Continued Fractions} Kluwer Axademic Publishers, Dordrecht( Boston(London 2002))2002
		\bibitem{R} Rényi, A. 1957: Representations for real numbers and their ergodic properties {\em Acta Math. Acad. Sci. Hunggar.} 8, 477-493 
		\bibitem{Sch95} Schweiger, F. 1995: {\em Ergodic Theory of Fibred Systems and Netric Number Theory} Okford Science Publications, New York The Clarendon Press, Oxford 1995	
		\bibitem{Sch06} Schweiger, F.  Differentiable equivalence of fractional linear maps.
		{\em IMS Lecture Notes-Monograph Series Dynamics \& Stochastics} 48
		(2006), 237-247
		\bibitem{Sch16} Schweiger, F. 2016: {\em Continued fractions and their   generalizations. A short history of f-expansions}. Docent Press, Boston 2016
	 \bibitem{Sch18} Schweiger, F. 2018: Invariant measures of piecewise fractional linear maps and piecewise quadratic maps. {\em Int. J Number Theory} 14 (218), 1559-1572
	 \bibitem{Sch25} Schweiger, F. 2025: . On Moebius maps which are characterized by the configuration of their dual maps 
	 {\em Indag. Math.} 16 (2025). 806-818
	 
	 
	\end{thebibliography}
\end{document}